\newtheorem{thm}{Theorem}[section]
\newtheorem{corol}[thm]{Corollary}
\newtheorem{lemma}[thm]{Lemma}
\newtheorem{prop}[thm]{Proposition}
\newtheorem{defin}[thm]{Definition}
\newtheorem{conj}[thm]{Conjecture}
\newcommand{\Oc}{\mathcal O}
\def\ker{\operatorname{ker}}
\def\rk{\operatorname{rk}}
\def\coker{\operatorname{coker}}
\def\dim{\operatorname{dim}}
\def\id{\operatorname{id}}
\newcommand\grass{\mbox{Gr}}
\newcommand\hgrass{{\mathfrak{Gr}}}
\newcommand{\cO}{{\mathcal O}}
\newcommand{\fE}{{\mathfrak E}}
\newcommand{\fF}{{\mathfrak F}}
\newcommand{\fG}{{\mathfrak G}}
\newcommand{\fQ}{{\mathfrak Q}}
\begin{document}

\title[Higgs bundles and fundamental group schemes]{Higgs bundles and fundamental 
group schemes}

\author[I. Biswas]{Indranil Biswas}

\address{School of Mathematics, Tata Institute of Fundamental
Research, Homi Bhabha Road, Bombay 400005, India}
\email{indranil@math.tifr.res.in}

\author[U. Bruzzo]{Ugo Bruzzo}

\address{Scuola Internazionale Superiore di Studi Avanzati (SISSA), Via Bonomea 265, 34136 Trieste, Italy;
Istituto Nazionale di Fisica Nucleare, Sezione di Trieste}

\email{bruzzo@sissa.it}

\author[S. Gurjar]{Sudarshan Gurjar}

\address{Department of Mathematics, Indian Institute of Technology, Mumbai  400076, India
}
\email{sgurjar@math.tifr.res.in}

\date{}

\subjclass[2000]{14J60, 18D35, 14F35}

\keywords{Higgs bundles, Tannakian category, numerically effectiveness}

\begin{abstract}  Relying on a notion of ``numerical effectiveness'' for Higgs bundles, we show that the category of ``numerically flat'' Higgs vector bundles on a smooth projective variety $X$ is a Tannakian category. We introduce the associated group scheme, that we call the ``Higgs fundamental group scheme of $X$,'' and show that its properties are related to a conjecture about the vanishing of the Chern classes of numerically flat Higgs vector bundles.
\end{abstract}

\maketitle

\section{Introduction}

Given a projective scheme $X$ over a field $k$, a line bundle $L$ on $X$ is said to be numerically effective
(abbreviated as ``nef'')
if $\deg f^\ast L \,\ge\, 0$ for every morphism $f\,\colon \,C\,\longrightarrow\, X$, where $C$ is an
irreducible smooth projective curve.
 A notion of numerical effectiveness for a vector bundle $E$ can be given by asking that the
relative hyperplane bundle $\mathcal{O}_{\mathbb{P}(E)}(1)$ on the projective bundle $\mathbb P(E)$ is nef  \cite{hartshorne-ample,lazarsfeld-I,lazarsfeld-II}.
More generally, if $\rk E = r$, one can consider for every $k$, with $0<k<r$, the Grassmann bundle
$\operatorname{Gr}_k(E)\,\longrightarrow\, X$ that parameterizes the quotients of fibers of $E$ of
dimension $k$.
The universal quotient bundle $Q_{k,E}$ of rank $k$ on $\operatorname{Gr}_k(E)$ 
   satisfies the well-known property that for any morphism $g \,\colon\, Y \,\longrightarrow\, X$
if $F$ is a rank $k$ quotient of $g^\ast E$, then there is a morphism $h\,\colon\, Y
\,\longrightarrow\, \operatorname{Gr}_k(E)$ which covers $g$ and satisfies the condition that
$F\,\simeq\, h^\ast Q_{k,E}$. It turns out that $E$ is nef if and only if all universal quotients $Q_{k,E}$ are nef.

One can consider vector bundles $E$ such that both $E$ and its dual $E^{^*}$ are nef. These are
called {\em numerically flat} bundles. The numerically flat bundles enjoy very special
properties; they have vanishing rational Chern classes \cite{DPS94}, and they form a Tannakian
category $\mathbf{NF}(X)$. The associated group scheme $G$ defined by the
property that $\mathbf{NF}(X)$ is
the category of representations of $G$ was introduced in \cite{BH,langer-fund-I}. 

Building on ideas already contained in \cite{bruzzo-hernandez-dga}, in \cite{bruzzo-grana-cmm} a 
definition of ``Higgs numerical effectiveness'' (``H-nef'' for short) was given (however
the basics of this theory in their final form were presented subsequently
in \cite{bruzzo-grana-adv}). Given a Higgs vector 
bundle $\mathfrak E \,=\, (E,\, \phi)$, and any $0<k<r$, the idea is to use the Higgs field $\phi$ to
construct a closed 
subscheme $\hgrass_k(\mathfrak E) \,\subset\,\operatorname{Gr}_k(E)$, with the property that 
a rank $k$ quotient $F$ of $E$ is a Higgs quotient of $\fE$ (i.e., the kernel corresponding to it is 
$\phi$--invariant) if and only if the image of the associated section of $\operatorname{Gr}_k(E)$
is contained
in $\hgrass_k(\mathfrak E)$. The universal quotient bundle $Q_{k,E}$ restricts to 
$\hgrass_k(\mathfrak E) $ to yield a {\em universal Higgs quotient bundle} $\fQ_{k,\fE}$. This opens the way to define Higgs-numerically effective Higgs bundles, in 
terms of a recursive positivity property of the bundles $\fQ_{k,\fE}$ (see Definition 
\ref{defHnef} for a precise statement). Higgs-numerically flat bundles (H-nflat Higgs bundles) are then 
defined as H-nef Higgs bundles for which the dual Higgs bundle is H-nef as well. It turns out
that the H-nflat Higgs bundles on a smooth projective variety $X$ make up a Tannakian category
$\mathbf{HNF}(X)$. We  denote by $\pi^H_1(X,x)$
the associated group scheme, where  $x\,\in\, X$ is the base point needed to define
the fiber functor, and call it the {\em Higgs fundamental group scheme of } $X$.

In Section \ref{last} we study some basic properties of this group. It turns out that this group 
is related to a conjectured property of Higgs bundles 
\cite{bruzzo-hernandez-dga,bruzzo-grana-adv}. For vector bundles $E$ on a projective manifold $X$, 
the following property is known to be true 
\cite{nakayama,bruzzo-hernandez-dga,biswas-bruzzo-imrn}.

\begin{thm}\label{thmvb}
The following conditions are equivalent:
\begin{itemize} \item for every morphism $f\,\colon\, C\,\longrightarrow\, X$, where $C$ is a
smooth irreducible projective curve, the bundle $f^\ast E$ is semistable;
\item $E$ is semistable with respect to some polarization, and the characteristic class
$$\Delta(E) = c_2(E) -\frac{r-1}{2r}c_1(E)^2 \in H^4(X,\mathbb Q)$$
vanishes (here $r = \rk E$).
\end{itemize}
\end{thm}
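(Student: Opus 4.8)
The plan is to reduce the statement, in both directions, to the characterization of numerically flat bundles, exploiting the fact that the class $\Delta(E)$ is unchanged under tensoring $E$ by a line bundle. First I would observe that both stated conditions are invariant under replacing $E$ by $E\otimes L$ for a line bundle $L$: semistability with respect to a fixed polarization is preserved, the property that all pullbacks $f^\ast E$ are semistable is preserved, and a direct Chern-class computation gives $\Delta(E\otimes L) = \Delta(E)$. Allowing $L$ to be a $\mathbb{Q}$-line bundle (equivalently, working with rational Chern classes) and using that $c_1(E)$ is the class of a divisor, I can choose $L$ with $c_1(L) = -\frac{1}{r}c_1(E)$, so that after twisting one may assume $c_1(E) = 0$ in $H^2(X,\mathbb{Q})$. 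Under this normalization the condition $\Delta(E)=0$ becomes simply $c_2(E)=0$, and the whole theorem reduces to the assertion that, for a bundle with $c_1(E)=0$, the pullback $f^\ast E$ is semistable for every $f\colon C\to X$ if and only if $E$ is semistable and $c_2(E)=0$, equivalently if and only if $E$ is numerically flat.

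For the implication from the first condition to the second, I would argue as follows. After the normalization $c_1(E)=0$, every pullback $f^\ast E$ to a smooth projective curve is semistable of degree $0$; such a bundle on a curve has all of its quotients of non-negative degree, hence is nef, and the same applies to $(f^\ast E)^\ast = f^\ast(E^\ast)$. Thus $E$ and $E^\ast$ both become nef after pullback to every curve, which is exactly the statement that $E$ is numerically flat. By the theorem of Demailly--Peternell--Schneider \cite{DPS94}, a numerically flat bundle is semistable with respect to every polarization and has vanishing rational Chern classes; in particular $c_2(E)=0$, so $\Delta(E)=0$ and $E$ is semistable. Undoing the twist recovers the second condition for the original bundle.

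For the reverse implication, after normalization I am given an $H$-semistable bundle $E$ with $c_1(E)=0$ and $c_2(E)=0$, and I must show that $E$ is numerically flat; restriction to curves then finishes the proof, because a numerically flat bundle pulls back to a numerically flat bundle on any curve, and a numerically flat bundle on a curve is an iterated extension of degree-$0$ stable bundles, hence semistable. The needed input is that an $H$-semistable bundle with $c_1=0$ and $c_2=0$ is numerically flat. I would deduce this from the Kobayashi--Hitchin correspondence: such a bundle admits an approximate Hermitian--Einstein structure and, by the structure theory of \cite{DPS94}, is a successive extension of Hermitian-flat bundles, i.e. bundles arising from unitary representations of $\pi_1(X,x)$, which are numerically flat.

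The main obstacle is precisely this last step, the implication that semistability together with $c_1=0$ and $c_2=0$ forces numerical flatness. Unlike the rest of the argument, which is formal bookkeeping with Chern classes and the elementary behaviour of semistable degree-$0$ bundles on curves, this step cannot be obtained by Chern-class manipulation alone: it is the equality case of the Bogomolov inequality and rests on the existence theory for Hermitian--Einstein metrics (Donaldson--Uhlenbeck--Yau and its extension to the semistable case) together with the characterization of numerically flat bundles as iterated extensions of flat unitary bundles. This analytic existence result is the crux on which the equivalence ultimately depends.
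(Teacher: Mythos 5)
You should first be aware that the paper does not actually prove Theorem \ref{thmvb}: it is quoted as a known result, with references to \cite{nakayama,bruzzo-hernandez-dga,biswas-bruzzo-imrn}, so there is no internal proof to compare against. That said, your outline reproduces the standard argument from those sources: reduce to $c_1(E)=0$, identify the first condition with numerical flatness via Barton's criterion \cite{barton-1971} (semistability of degree $0$ on every curve is equivalent to nefness of $E$ and $E^\ast$), quote \cite{DPS94} for the vanishing of the rational Chern classes of a numerically flat bundle, and invoke the Donaldson--Uhlenbeck--Yau/Simpson correspondence for the converse. You also correctly isolate the equality case of the Bogomolov inequality as the genuinely deep, non-formal input. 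The one step you should repair is the normalization: there is in general no line bundle, even a ``$\mathbb{Q}$-line bundle,'' with $c_1(L)=-\tfrac{1}{r}c_1(E)$, and several of your subsequent moves (nefness, numerical flatness, the approximate Hermitian--Einstein structure) must be applied to an honest vector bundle rather than to a formal rational twist. The standard fixes are either to replace $E$ by $\End(E)=E\otimes E^\ast$, which has vanishing first Chern class, satisfies $c_2(\End(E))=2r\,\Delta(E)$, and is (pullback-)semistable if and only if $E$ is in characteristic zero --- this is exactly the device the paper itself uses in the proof of Corollary \ref{equiv} --- or to pass to a finite cover on which an $r$-th root of $\det E$ exists, or to work with the normalized tautological divisor on $\PP(E)$ as in \cite{nakayama}. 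With that substitution your argument is complete modulo the cited analytic input (\cite{DPS94}, \cite{simpson-local}), which is indeed where the theorem's content lies.
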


For Higgs bundles, it is known that the second condition implies the first 
\cite{bruzzo-hernandez-dga,bruzzo-grana-adv}, but the fact that the first implies the second is 
an open conjecture (see \cite{bruzzo-logiudice} for the characterization of a class of varieties 
for which this conjecture holds). It is equivalent to the fact that H-nflat Higgs bundles have 
vanishing rational Chern classes (see Corollary \ref{equiv}). For future convenience, we explicitly state this conjecture.

\begin{conj} Let $\mathfrak E = (E,\phi)$ be a Higgs bundle on a smooth projective variety $X$,
such that for every morphism $f\,\colon\, C\,\longrightarrow\, X$, where $C$ is a
smooth irreducible projective curve, the Higgs bundle $f^\ast \fE$ is semistable. Then
$\Delta(E) =0$.\label{conj}
\end{conj}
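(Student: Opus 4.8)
The plan is to mirror the proof of Theorem \ref{thmvb} for ordinary vector bundles, isolating precisely the point where the Higgs setting fails to close. First I would reduce to the case of a surface. Since $\Delta(E)\in H^4(X,\Q)$, and for a complete intersection surface $S=H_1\cap\cdots\cap H_{n-2}$ one has $\Delta(E)\cdot[S]=\Delta(E|_S)$, a standard application of the hard Lefschetz theorem shows that it suffices to verify $\Delta(E|_S)=0$ for every such $S$. The restriction $\fE|_S$ is again curve-semistable, because every curve in $S$ is a curve in $X$, so the hypothesis is inherited. Thus I may assume $\dim X=2$, where $\Delta(E)$ is a single rational number.

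On a surface the lower bound $\Delta(E)\ge 0$ is essentially free: taking $f$ to be a complete intersection curve of large degree shows that $\fE$ is itself semistable, and the Bogomolov inequality for semistable Higgs bundles \cite{bruzzo-grana-adv} gives $\Delta(E)\ge 0$. Everything therefore hinges on the reverse inequality. To obtain it I would pass to the endomorphism Higgs bundle $\End(\fE)$, with its induced Higgs field; it is again curve-semistable (tensor products of semistable Higgs bundles on a curve being semistable in characteristic zero), and it satisfies $c_1(\End E)=0$ and $\Delta(\End E)=2r\,\Delta(E)$. For each $f\colon C\to X$ the pullback $f^\ast\End(\fE)$ is a semistable Higgs bundle of degree $0$ on a curve, hence H-nef; the same holds for its dual, so $\End(\fE)$ is H-nflat. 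If every H-nflat Higgs bundle had vanishing rational Chern classes, then $\Delta(\End E)=0$, whence $\Delta(E)=0$ and the argument would be complete.

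The hard part is exactly this last step: proving that an H-nflat Higgs bundle has $c_2=0$. By Corollary \ref{equiv} this is logically equivalent to Conjecture \ref{conj} itself, so the reduction above, while clarifying, is circular, and a genuinely new ingredient is required. In the non-Higgs case the corresponding statement is the theorem of Demailly--Peternell--Schneider \cite{DPS94}, whose proof filters a numerically flat bundle by Hermitian-flat (unitary) subbundles obtained from approximate Hermite--Einstein metrics. The obstacle is that this analytic input is not available for Higgs bundles without already assuming the vanishing of the Chern classes: Simpson's nonabelian Hodge correspondence attaches a harmonic metric, and hence a representation of $\pi_1(X,x)$, only to polystable Higgs bundles whose Chern classes already vanish. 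I would therefore attempt to construct by hand a filtration of an H-nflat Higgs bundle whose graded pieces come from unitary representations of the Higgs fundamental group scheme, exploiting the Tannakian structure of $\mathbf{HNF}(X)$ and the universal Higgs quotients $\fQ_{k,\fE}$; vanishing of $c_2$ would then follow from the Chern class vanishing for flat bundles.

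A complementary line, which avoids the analytic machinery and is the one that succeeds for the special varieties treated in \cite{bruzzo-logiudice}, is to prove the reverse inequality $\Delta(E)\le 0$ directly on the surface. If $\Delta(E)>0$ one would like a Bogomolov-type destabilization producing a finite cover, or a curve $f\colon C\to S$, along which $f^\ast\fE$ acquires a $\phi$-invariant subsheaf of too-large slope, contradicting curve-semistability. Making this destabilization work for an arbitrary surface, with the Higgs field controlling which subsheaves can occur, is where I expect the real difficulty to lie, and it is the reason the statement remains a conjecture rather than a theorem.
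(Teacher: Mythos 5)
This statement is Conjecture \ref{conj}: the paper offers no proof of it, and explicitly describes it as open, so there is no argument of the authors' to compare yours against. Your proposal is, by your own admission, not a proof either --- and you are right to say so. The reductions you carry out are all sound and consistent with what the paper and its references establish: restriction to complete intersection surfaces preserves curve-semistability and computes $\Delta(E)$ against $[S]$; a high-degree complete intersection curve shows $\fE$ is semistable, so the Bogomolov inequality for semistable Higgs bundles gives $\Delta(E)\ge 0$; passing to $\End(\fE)$ kills $c_1$ and rescales $\Delta$ by $2r$; and Proposition \ref{results}(ii) makes $\End(\fE)$ H-nflat. You also correctly identify that invoking the vanishing of Chern classes of H-nflat Higgs bundles at that point is exactly Corollary \ref{equiv} run backwards, hence circular.

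The genuine gap is therefore the one you name yourself: there is no available substitute for the Demailly--Peternell--Schneider filtration argument in the Higgs setting, because Simpson's correspondence only attaches a harmonic metric (and hence a flat object with vanishing Chern classes) to Higgs bundles whose Chern classes are already known to vanish --- the very thing to be proved. Neither of your two suggested escape routes (a hand-built filtration from the Tannakian structure of $\mathbf{HNF}(X)$, or a direct Bogomolov-type destabilization when $\Delta(E)>0$ on a surface) is carried out, and the paper gives no indication that either can be; it only records that the conjecture is known for special classes of varieties \cite{bruzzo-logiudice}. So your submission should be read as an accurate positioning of the problem, not as a proof; presenting it as a proof attempt would require you to close the step you have flagged, and that step is precisely the open conjecture.
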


As we discuss in Section \ref{last},  the above conjecture is 
also related to the following product formula for the Higgs fundamental group scheme: if $X$, 
$Y$ are smooth projective varieties over a field $k$, and $x$, $y$ are points in $X$, $Y$, 
respectively, then $$\pi_1^H(X\times_k Y,\,(x,y)) \,\simeq\, 
\pi_1^H(X,\,x)\times\pi_1^H(Y,\,y)\,.$$

\smallskip
{\bf Acknowledgements.} We thank Beatriz Gra\~na Otero for useful discussions.
 I.B. is supported by a J.C.~Bose Fellowship. U.B.'s research is partly supported by
INdAM-GNSAGA. S.G.~would like to thank the International Centre for
Theoretical Physics, Trieste for a year long postdoctoral position during
which time the project was initiated. The project continued when the
author visited the Centre for Quantum Geometry of Moduli Spaces, Aarhus.
He would like to thank the Center for Quantum Geometry of Moduli Spaces
for their hospitality, supported by a Center of Excellence grant from the
Danish National Research Foundation (DNRF95) and by a Marie Curie
International Research Staff Exchange Scheme Fellowship within the 7th
European Union Framework Programme (FP7/2007-2013) under grant agreement n
612534, project MODULI - Indo European Collaboration on Moduli Spaces.

U.B.~is a member of VBAC.

\bigskip
\section{Numerically effective Higgs bundles}

\textbf{Notation.} \ 
Unless otherwise stated, $X$ will denote a smooth projective variety
of dimension $n$ defined over an algebraically closed field $k$ of
characteristic zero. The cotangent bundle of $X$ will be denoted by $\Omega^1_X$. We shall usually denote
by a Gothic letter, such as $\fE$, a pair $(E,\,\phi)$, 
where $E$ is a coherent sheaf and $\phi$ is a Higgs field on $E$ (see Definition \ref{defHnef}).
So a roman letter will denote the underlying coherent sheaf  of a Higgs sheaf. 

We fix a very ample line bundle on $X$ and denote by $H$ its
numerical class. The degree of a torsion-free coherent $\cO_X$--module $F$ is defined as
to be
$$\deg F \,:=\, c_1(F)\cdot H^{n-1}\, ,$$
and if $\rk F\,\not=\, 0$, one defines the {\it slope} of $F$ to be
$$\mu( F)\, := \,\frac{\deg F}{\rk  F}\, .$$

\begin{defin} A Higgs sheaf $\fE$ on $X$ is a pair $(E,\phi)$, where $E$ is a torsion-free
coherent sheaf on $X$  and $$\phi \,\colon\, E \,\longrightarrow\, E \otimes \Omega_X^1$$ is a homomorphism of
$\cO_X$-modules
such that $\phi\wedge\phi\,=\,0$. A Higgs subsheaf
 of a Higgs sheaf $\fE=(E,\phi)$ is a pair $(G,\phi')$, where $G$ is a subsheaf of $E$
such that $\phi(G)\,\subset\, G\otimes\Omega_X^1$, and $\phi'\,=\, \phi\vert_G$.
A Higgs bundle is a Higgs sheaf $\fE $ such that $E$ is a
locally-free $\Oc_X$-module. If $\fE\,=\,(E,\phi)$ and $\fG\,=\,(G,\psi)$ are
Higgs sheaves, a morphism $f\,\colon\,\fE\,\longrightarrow\, \fG$ is a homomorphism of $\Oc_X$-modules
$f\,\colon\, E\,\longrightarrow\, G$ such that the diagram
$$\xymatrix{
E\ar[r]^f \ar[d]_\phi & G \ar[d]^\psi \\
E\otimes\Omega^1_X \ar[r]^{f\otimes\id} & G\otimes\Omega^1_X }
$$
commutes.
\label{defHnef}
\end{defin}

\begin{defin} A Higgs sheaf $\fE=(E,\phi) $ is semistable (respectively, stable) if
$\mu(G)\le \mu(E)$ (respectively, $\mu(G)< \mu(E)$) for every
Higgs subsheaf $(G,\phi')$ of $\fE$ with $0\, <\,\rk  G\, <\, \rk  E$.\end{defin}

From now on, unless otherwise stated, by semistability of a Higgs bundle we will mean semistability in the Higgs sense (as in the above definition).
Let us recall the definition of numerical effective vector bundles
on a projective variety $X$. A line bundle $L$ on $X$ is said to
be numerically effective (nef for short) if, for every pair $(C\, ,f)$, where
$C$ is a smooth projective irreducible curve and $f\,\colon\, C \,\longrightarrow\, X$ is a morphism,
the line bundle $f^\ast L$ on $C$ has nonnegative degree. A vector bundle $E$ is
numerically effective
if the hyperplane line bundle $\cO_{\mathbb P(E)}(1)$ on the projectivization
$\mathbb P(E)$ of $E$ is numerically
effective. For the main properties of numerically effective vector bundles
see e.g.\ \cite{hartshorne-ample,lazarsfeld-I,lazarsfeld-II}.

Let $E$ be a vector bundle of rank $r$ on $X$, and let $s\, <\, r$ be a positive
integer. We shall denote by $\grass_s(E)$ the Grassmann bundle on $X$ parameterizing
quotients of fibers of $E$ of dimension $s$. Let $p_s \,:\, \grass_s(E) \,\longrightarrow\, X$ be the
natural projection. There is a universal short exact sequence
\begin{equation}\label{univ}
0 \,\longrightarrow\, S_{r-s,E} \,\stackrel{\psi}{\longrightarrow}\, p_s^* E \,
\stackrel{\eta}{\longrightarrow}\, Q_{s,E}
\,\longrightarrow\, 0
\end{equation}
of vector bundles on $\grass_s(E)$, with $S_{r-s,E}$ being a universal   
subbundle of rank $r-s$ and $Q_{s,E}$ a universal quotient of rank $s$. 
Given a Higgs bundle $\fE\,=\,(E,\phi) $, we have the closed subschemes
$\hgrass_s(\fE)\,\subset\, \grass_s(E)$ parameterizing rank
$s$ locally-free Higgs quotients, i.e., locally-free quotients of
$E$ whose corresponding kernels are $\phi$-invariant. In other words,
$\hgrass_s(\fE)$ \emph{(the Grassmannian of locally free rank $s$ Higgs
quotients of $\fE$)} is the closed subscheme of $\grass_s(E)$ defined by
the vanishing of the composed morphism
\begin{equation}\label{lambda}
(\eta\otimes\text{Id})\circ p_s^\ast(\phi) \circ \psi\,\colon\, S_{r-s,E}\,\longrightarrow\, Q_{s,E}\otimes
 p_s^\ast\Omega_X^1\, .
\end{equation}
Let $\rho_s\, :=\, p_s\vert_{\hgrass_s(\fE)}\, :\, \hgrass_s(\fE)\,\longrightarrow\, X$ be the
restriction. The restriction of \eqref{univ} to $\hgrass_s(\fE)$ provides the universal
exact sequence
\begin{equation}\label{g1}
0\,\longrightarrow\, \mathfrak S_{r-s,\fE}\,\stackrel{\psi}{\longrightarrow} \,\rho_s^\ast \fE\,
\stackrel{\eta}{\longrightarrow} \mathfrak Q_{s,\fE}\,\longrightarrow\, 0\, ,
\end{equation}
with $\mathfrak Q_{s,\fE}\,:=\, Q_{s}\vert_{\hgrass_s(\fE)}$ 
being equipped with the quotient Higgs field induced by the Higgs field
$\rho_s^\ast \phi$.
The universal property satisfied by
$\hgrass_s(\fE)$ is that given any morphism of $k$-varieties $f: T \rightarrow X$, $f$ factors through
$\hgrass_s(\fE)$ if and only if the pullback $f^*(E)$ admits a Higgs quotient of rank $s$. In that case the
pullback of the above universal sequence on $\hgrass_s(E)$ gives the desired quotient of $f^*(E)$.

\begin{defin}\label{moddef}  
A Higgs bundle $\fE$ of rank one is said to be Higgs-numerically
effective  (H-nef for short) if it is numerically effective in the usual sense. If
$\rk \fE \geq 2$, we inductively define H-nefness by requiring that
\begin{enumerate}
\item all Higgs bundles $\fQ_{s,\fE}$ are Higgs-nef (see \eqref{g1}) for all $s$, and

\item the determinant line bundle $\det(E)$ is nef.
\end{enumerate}
If both $\fE$ and $\fE^\ast$ are Higgs-numerically effective, $\fE$ is said to
be Higgs-numerically flat (H-nflat).
\end{defin}

Definition \ref{moddef} immediately implies that the first Chern class of an
H-numerically flat Higgs bundle is numerically equivalent to zero. Note that if
$\fE\,=\,(E,\,\phi)$, with $E$ nef in the usual sense, then $\fE$ is H-nef. Moreover, if
$\phi\,=\,0$, the Higgs bundle $\fE\,=\,(E,\,0)$ is H-nef if
and only if $E$ is nef in the usual sense (as in this case the Higgs Grassmannian coincides
with the usual Grassmannian bundle, and the respective universal bundles coincide).

We recall that in the case of ordinary vector bundles, nefness is defined using only the
hyperplane bundle.
Let us motivate why one should consider the behavior of the universal Higgs quotients 
of all ranks, and therefore introduce Higgs Grassmannians corresponding to quotients of 
all ranks. In the case of ordinary bundles, if the hyperplane bundle is nef, then the 
universal quotients of all ranks are nef as well; indeed, if a vector bundle $E$ is 
nef, its pullback to the Grassmannian $\grass_s(E)$ is nef, and the quotient $Q_{s,E}$ 
(see equation \eqref{univ}) is nef too. This is not the case for Higgs bundles, as 
the following example shows.

Let $\fE\,=\,(E,\,\phi)$  be a rank three nilpotent Higgs bundle on a smooth  
projective curve $C$, having  the form  $E\,=\,L_1\oplus L_2\oplus L_3$, where 
each $L_i$ is a line bundle, and  $\phi(L_1)\,\subset\, L_2\otimes\Omega^1_C$, $\phi(L_2)\,\subset\,  
L_3\otimes\Omega^1_C$, $\phi(L_3)\,=\,0$. Denote by $d_i$ the degree of $L_i$, and assume that
$d_1+d_2+d_3\,=\,0$.
The computations in Section 3.4 of  \cite{bruzzo-hernandez-dga} show that the hyperplane bundle of $\fE$, restricted to the
Higgs Grassmannian
$\hgrass_1(\fE)$, is nef if $2d_1-d_2-d_3\ge 0$, while the rank two universal quotient
on $\hgrass_2(\fE)$ is
nef if and only if 
$d_1+d_2-2d_3\ge 0$. There exist values of the degrees for which the first inequality holds and the second
does not. For instance, if $C$ has genus 3, one can take $d_1=d_3=1$, $d_2=-1$. Note that by Riemann-Roch theorem
$h^0(C,K_C)\,>\, 0$ and hence an effective divisor exists in the linear system $|K_C|$ (of degree 4). To ensure that there exists 
a nonzero Higgs morphism, write $K_C=(x_1+x_2+x_3+x_4)$, with $x_i$ points in $C$, 
and take $L_1\,=\,(x_1)$, $L_2\,=\,-(x_2+x_3)$, $L_3\,=\,(x_4)$. 

Moreover, one includes the  condition that $\det(E)$ is  nef in Definition \ref{moddef}  
to prevent the existence of H-nef Higgs bundles of negative degree. One such example is
provided by a Higgs bundle $\fE=(E,\phi)$ on a smooth projective curve, with $E=L_1\oplus L_2$ (where $L_1$,
$L_2$ are line bundles),
and $\phi\,\colon\, L_1\,\longrightarrow\, L_2\otimes \Omega_X^1$, $\phi(L_2)\,=\,0$. 
As shown in \cite{bruzzo-hernandez-dga}, $\fE$
has only  two Higgs quotients, i.e., $L_1$ and 
$$ Q = \coker (\phi\otimes\mbox{id}) \,\colon\, E\otimes T_X \,\longrightarrow\, E$$
modulo torsion; the latter one will be denoted by $\overline Q$.
Note that $\deg({\overline Q}) \ge \deg(L_1)$. If the genus of $X$ is at least 2, one
can for instance take $\deg(L_1)\,=\,0$ and $\deg(L_2) \,=\,-2$.
Then $\fE$ satisfies all the conditions in the definition of H-nefness
except the one which says that $\det(E)$ is nef.

\bigskip
\section{Properties of H-nef Higgs bundles}

We give a few properties of H-nef Higgs bundles.
\begin{prop}\label{results}\mbox{}
\begin{enumerate}
\item[(i)] An H-numerically flat Higgs bundle is semistable. 

\item[(ii)]  Let  $\fE\,=\,(E,\phi)$ be a Higgs bundle whose first Chern class is numerically equivalent to zero. 
Assume that for all morphisms $f\,\colon\, C \,\longrightarrow\, X$, where $C$ is a smooth irreducible projective curve,
the pullback $f^\ast \fE$ is semistable. Then $\fE$ is H-nflat. 

\item[(iii)]   Let $$ 0 \,\longrightarrow\, \fF \,\longrightarrow\, \fE \,\longrightarrow\, \fG
\,\longrightarrow\, 0 $$
be a short exact sequence of Higgs bundles. If $\fF$ and $ \fG$ are H-nflat, so is $\fE$.

\item[(iv)]  If  $\fE$ and $\fG$ are H-nflat Higgs bundles, then the tensor product
$\fE\otimes \fG$ is H-nflat. 
\end{enumerate}
\end{prop}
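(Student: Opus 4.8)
The four parts are best handled together, because they are tied by a single principle: H-nflatness is a \emph{curve-local} property, in the sense that $\fE$ is H-nflat if and only if $f^\ast\fE$ is semistable of degree zero for every morphism $f\colon C\to X$ from a smooth projective curve. This characterization is the heart of the theory developed in \cite{bruzzo-grana-adv,bruzzo-hernandez-dga}, and granting it, parts (ii), (iii) and (iv) become essentially formal, while (i) admits a direct argument through the universal Higgs quotient. The plan is therefore to prove (i) by hand and then feed the curve characterization into the remaining parts; the genuine content will be concentrated in that characterization and, for (iv), in the tensor product theorem for semistable Higgs bundles.

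For (i), recall that H-nflatness forces $c_1(E)\equiv 0$, so $\mu(\fE)=0$ and semistability amounts to showing that every Higgs quotient of $\fE$ has nonnegative degree. I would argue by contradiction: if $\fE$ were not semistable there would be a Higgs subsheaf of positive degree, hence a Higgs quotient of strictly negative degree, and after dividing by torsion (which only lowers the degree) a torsion-free Higgs quotient $\fQ$ of some rank $s$ with $\deg\fQ<0$. On the big open set where $\fQ$ is locally free it determines a section of $\rho_s\colon\hgrass_s(\fE)\to X$ along which $\fQ$ is the pullback of the universal Higgs quotient $\fQ_{s,\fE}$ of \eqref{g1}. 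Since $\fE$ is H-nef, so is $\fQ_{s,\fE}$, and H-nefness is preserved under pullback; hence $\fQ$ is H-nef on the locally free locus, its determinant is nef there, and evaluating on a general complete-intersection curve contained in that locus gives $\deg\fQ\ge 0$, a contradiction.

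For (ii), the hypotheses say precisely that $f^\ast\fE$ is semistable for every $f\colon C\to X$, while $c_1(E)\equiv 0$ forces $\deg f^\ast\fE=0$; thus $f^\ast\fE$ is semistable of degree zero on every curve, and the curve characterization in its converse direction yields that $\fE$ is H-nflat. Parts (iii) and (iv) then reduce to stability statements on curves. For (iii), by (i) combined with restriction the Higgs bundles $f^\ast\fF$ and $f^\ast\fG$ are semistable of degree zero for every $f$; an extension of a semistable degree-zero Higgs bundle by another is again such (degrees add, and any destabilizing Higgs subsheaf of $f^\ast\fE$ would, via its intersection with $f^\ast\fF$ and its image in $f^\ast\fG$, destabilize one of them), so $f^\ast\fE$ is semistable of degree zero and the characterization returns H-nflatness of $\fE$. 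For (iv), the same reduction makes $f^\ast(\fE\otimes\fG)=f^\ast\fE\otimes f^\ast\fG$ a tensor product of semistable degree-zero Higgs bundles, again semistable of degree zero by the tensor product theorem for semistable Higgs bundles in characteristic zero; one more application of the characterization finishes the proof.

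The main obstacle is the curve characterization itself, namely that semistability of degree zero on every curve propagates up the tower of Higgs Grassmannians to force H-nefness of all the universal quotients $\fQ_{s,\fE}$. Establishing it requires the base-change compatibility $\hgrass_s(f^\ast\fE)\cong f^\ast\hgrass_s(\fE)$, which identifies the restrictions of the universal Higgs quotients with the universal Higgs quotients of the restrictions, so that the recursive nefness conditions can be tested curve by curve; this is the technical core inherited from \cite{bruzzo-hernandez-dga,bruzzo-grana-adv}. For part (iv) there is a second, independent difficulty: the preservation of semistability under tensor product is itself a nontrivial theorem, available in characteristic zero but with no elementary proof, and it is the only genuinely external input beyond the curve characterization.
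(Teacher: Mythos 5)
Your proposal is correct and follows essentially the same route as the paper: parts (iii) and (iv) are reduced to curves, where one uses additivity of degree, the fact that extensions and tensor products (the latter via the Balaji--Parameswaran theorem \cite{balaji-para}) of semistable degree-zero Higgs bundles are again semistable of degree zero, and then the criterion of part (ii); like you, the paper treats (i) and (ii) as the real content and simply cites them (Proposition 8.8 and Lemma 8.7 of \cite{bruzzo-grana-adv}). The only difference is that you sketch a direct Higgs-Grassmannian argument for (i) where the paper defers to the reference, and that sketch is sound.
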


\begin{proof}
(i)  and (ii) are    Proposition 8.8  and Lemma 8.7 in  \cite{bruzzo-grana-adv}, respectively.

(iii) Let  $C$ be a smooth irreducible projective curve, and $f\,\colon\, C \,\longrightarrow\, X$ a morphism. Then the sequence
$$ 0\,\longrightarrow\, f^\ast\fF \,\longrightarrow\, f^\ast \fE \,\longrightarrow\, f^\ast\fG
\,\longrightarrow\, 0 $$
is exact. As $f^\ast\fF$ and $f^\ast\fG $ are H-nflat,  their first Chern classes are numerically
equivalent to zero and they
are semistable by part (i). It follows that $f^\ast \fE$ is semistable as well. Hence $\fE$
is H-nflat.

(iv) Again, let  $C$ be a smooth irreducible projective curve, and $f\,\colon\, C \,\longrightarrow\, X$ a morphism. By the same
argument
as in part (iii), $f^\ast\fE$ and $ f^\ast\fG $ are  semistable. Then, as shown in \cite{balaji-para}, 
$f^\ast\fE\otimes f^\ast\fG \simeq f^\ast (\fE\otimes \fG )$ is semistable as well. 
 Moreover, 
$$c_1(E\otimes G) = \rk E\cdot c_1(G) + \rk G\cdot c_1(E) \equiv 0.$$
So by part (ii), $\fE\otimes \fG$ is H-nflat.
\end{proof}

A corollary to Proposition \ref{results} is that Conjecture  \ref{conj} is equivalent to the property that all H-nflat Higgs bundles have vanishing rational Chern classes (the analogous fact for vector bundles was proved in \cite{bruzzo-hernandez-dga}).

\begin{corol} \label{equiv}The following facts are equivalent.
\begin{enumerate} \item[(i)]  If $\fE=(E,\phi)$ is a Higgs bundle on $X$, and 
for all morphisms $f\,\colon\, C \,\longrightarrow\, X$, where $C$ is a smooth irreducible projective curve,
the pullback $f^\ast \fE$ is semistable, then $\Delta(E)=0$. 
\item[(ii)]  All rational Chern classes of an H-nflat Higgs bundle vanish.
\end{enumerate}
\end{corol}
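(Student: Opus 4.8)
First I would prove that (i) implies (ii). Let $\fE = (E,\phi)$ be an H-nflat Higgs bundle. By Definition \ref{moddef}, $\fE$ being H-nflat means both $\fE$ and $\fE^\ast$ are H-nef; in particular the first Chern class $c_1(E)$ is numerically equivalent to zero, so $\Delta(E) = c_2(E) - \frac{r-1}{2r}c_1(E)^2$ reduces (rationally) to $c_2(E)$, and it suffices to show $\Delta(E) = 0$. The key fact I would invoke is that for any morphism $f\colon C \longrightarrow X$ from a smooth irreducible projective curve, the pullback $f^\ast\fE$ is again H-nflat (H-nefness is defined via pullbacks to curves and is clearly stable under further pullback along $f$), and hence is semistable by Proposition \ref{results}(i). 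Thus $\fE$ satisfies the hypothesis of (i), and applying (i) gives $\Delta(E) = 0$. This direction is essentially a formal consequence of part (i) of the Proposition together with the behavior of H-nefness under pullback.

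The reverse implication, that (ii) implies (i), is where I expect to use Proposition \ref{results}(ii). Suppose every H-nflat Higgs bundle has vanishing rational Chern classes, and let $\fE = (E,\phi)$ satisfy the hypothesis of (i), namely that $f^\ast\fE$ is semistable for every morphism $f$ from a smooth projective curve. I would like to conclude that $\fE$ is H-nflat and then apply (ii). The obstacle is that Proposition \ref{results}(ii) requires as a hypothesis that $c_1(E)$ is numerically equivalent to zero, which is not given a priori. The standard device to circumvent this is to twist: one considers the endomorphism bundle $\mathcal{E}nd(\fE) = \fE \otimes \fE^\ast$, or equivalently works with $\fE \otimes (\det\fE)^{-1/r}$ after passing to a suitable root, whose first Chern class vanishes by construction. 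Pullback of $\mathcal{E}nd(\fE)$ to any curve is semistable of degree zero (a tensor product of semistable bundles, via the argument in Proposition \ref{results}(iv)), so by part (ii) of the Proposition $\mathcal{E}nd(\fE)$ is H-nflat, and then hypothesis (ii) of the Corollary forces $\Delta(\mathcal{E}nd(E)) = 0$.

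The main obstacle, and the crux of the proof, is therefore the final numerical step: translating the vanishing of the Chern classes of $\mathcal{E}nd(E)$ back into the vanishing of $\Delta(E)$. Here I would use the standard identity relating the second Chern character of an endomorphism bundle to the discriminant, namely $\Delta(\mathcal{E}nd(E)) = 2r\,\Delta(E)$ in $H^4(X,\mathbb{Q})$ (this follows from $\operatorname{ch}(\mathcal{E}nd(E)) = \operatorname{ch}(E)\cdot\operatorname{ch}(E^\ast)$ and a direct degree-two computation). Since $2r \neq 0$ in $\mathbb{Q}$, vanishing of $\Delta(\mathcal{E}nd(E))$ yields $\Delta(E) = 0$, completing the implication. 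The parallel result for ordinary vector bundles was established in \cite{bruzzo-hernandez-dga}, and the Higgs case proceeds along the same lines, the only genuinely new input being that the stability, tensor, and extension properties collected in Proposition \ref{results} hold in the Higgs setting.
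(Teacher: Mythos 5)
Your implication (ii)~$\Rightarrow$~(i) is essentially the paper's argument, with a welcome extra level of detail: the paper merely says ``replace $\fE$ by its endomorphism bundle,'' while you spell out that $f^\ast\mathcal{E}nd(\fE)$ is semistable of vanishing first Chern class, so Proposition \ref{results}(ii) makes it H-nflat, and you verify the identity $\Delta(\mathcal{E}nd(E))=2r\,\Delta(E)$ that lets you descend from $c_2(\mathcal{E}nd(E))=0$ to $\Delta(E)=0$. That direction is fine.

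The gap is in (i)~$\Rightarrow$~(ii). Statement (ii) asserts that \emph{all} rational Chern classes of an H-nflat Higgs bundle vanish, but your argument only produces $c_1(E)\equiv 0$ and $\Delta(E)=0$, i.e.\ the vanishing of $c_1$ and $c_2$ in $H^\ast(X,\Q)$. Your sentence ``it suffices to show $\Delta(E)=0$'' is not justified: on a variety of dimension $n\ge 3$ nothing so far controls $c_3(E), c_4(E),\dots$. The paper closes this gap by invoking Theorem 2 of \cite{simpson-local}: a semistable Higgs bundle with vanishing $\mathrm{ch}_1$ and $\mathrm{ch}_2$ admits a filtration whose successive quotients are flat (come from local systems), and hence all of its rational Chern classes vanish. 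You need this (or some equivalent input) as the final step; without it the implication only yields the vanishing of $c_1$ and $c_2$, which is strictly weaker than (ii). Everything else in your write-up is consistent with the paper's proof.
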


\begin{proof} Assume that (i) holds, and let  $\fE=(E,\phi)$ be H-nflat. Then it follows easily that $f^*(\fE)$ is also H-nflat. 
By Proposition \ref{results} (i), every pullback $f^\ast \fE$ is semistable. Since (i) holds, we have $\Delta(E)=c_2(E)=0$. By Theorem 2 in \cite{simpson-local}, $\fE$ has a filtration whose quotients are flat, so that all Chern classes of $\fE$ vanish.

Conversely, assume that (ii) holds, and let $\fE$ be a Higgs bundle such that all pullbacks $f^\ast\fE$ are semistable. We may assume that 
$\fE$ has vanishing first Chern class by replacing it with its endomorphism bundle. By Proposition \ref{results}  (ii), $ \fE$ is H-nflat. Since (ii)
holds, we have in particular $\Delta(E)=0$.
\end{proof}

Actually Proposition \ref{results} (iv) can be generalized to H-nef Higgs bundles. We shall use the
Harder-Narasimhan filtration for Higgs bundles on curves \cite{arijit-partha-2005}. Given a Higgs bundle $\fE$ on a smooth,
projective curve $Y$ defined over $k$, 
 there exists a unique filtration of $\fE$ by Higgs subsheaves $0 \subset \fE_1  \subset \fE_2 \subset  \cdots
\subset \fE_r = \fE$   such that 
the successive quotients $\fE_i/\fE_{i-1}$ are semistable as Higgs sheaves with  their slopes satisfying the
inequalities $\mu(E_i/E_{i-1}) > \mu(E_{i+1}/E_i)$  for all $i$. Set $\mu_{max}(\fE)=\mu(E_1)$ and
$\mu_{min}(\fE)=\mu(E_r/E_{r-1})$.

In the rest of the paper, by the Harder-Narasimhan filtration of a Higgs bundle we will mean a filtration as
above. 

 The Harder-Narasimhan filtration for Higgs bundles on a curve has the following basic properties,
analogous to those of the Harder-Narasimhan filtration for torsion-free sheaves (see [1]): 

 1. If $\fE$ and $\fF$ are two Higgs bundles, $\mu_{\text{max}}(\fE \otimes \fF) = \mu_{\text{max}}(\fE) +
\mu_{\text{max}}(\fF)$ and $\mu_{\text{min}}(\fE \otimes \fF) = \mu_{\text{min}}(\fE) +
\mu_{\text{min}}(\fF)$.
 
2. If $\fE^{\bullet}= \{\fE_0\subset \fE_1\subset \fE_2\subset\cdots \subset \fE_s\}$ is any filtration of $\fE$
such that each filter is preserved by the Higgs field, the
Harder-Narasimhan polygon for $\fE^{\bullet}$ lies under the Harder-Narasimhan polygon for $\fE$. (See
[13] for the definition of the Harder-Narasimhan polygon.)
\smallskip

The following lemma generalizes a criterion holding for numerically effective bundles 
\cite{barton-1971}.

\begin{lemma}\label{le1}
Let $\fE$ be a Higgs bundle on $X$. Then  $\fE$ is H-nef if and only if for any morphism
$f\colon C \rightarrow X$, where $C$ is a smooth projective irreducible curve, one has
$\mu_{\text{min}}(f^{*}\fE) \,\geq\, 0$.
\end{lemma}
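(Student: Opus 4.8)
The plan is to argue by induction on the rank $r$ of $\fE$, mirroring the recursive structure of Definition \ref{moddef}. The base case $r=1$ is immediate: a rank one Higgs bundle $f^\ast\fE$ on a curve is automatically semistable, so $\mu_{\min}(f^\ast\fE)=\deg f^\ast L$ where $L$ is the underlying line bundle, and the condition $\mu_{\min}(f^\ast\fE)\geq 0$ for all $f$ is precisely the definition of $L$ being nef, which is what H-nef means in rank one. For the inductive step I would assume the equivalence for all Higgs bundles of rank $<r$ (on any projective base over which the Higgs Grassmannians are defined) and treat the two implications separately. Throughout, the essential input is the Higgs Harder--Narasimhan filtration on curves recalled above: its terms are Higgs subsheaves, so its minimal quotient is a \emph{semistable Higgs quotient} of $f^\ast\fE$ of slope $\mu_{\min}(f^\ast\fE)$, and on a smooth curve it is locally free.

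For the implication ``$\mu_{\min}\geq 0$ for all $f$ $\Rightarrow$ $\fE$ H-nef'', I first verify condition (2) of Definition \ref{moddef}: since every Harder--Narasimhan slope of $f^\ast\fE$ is at least $\mu_{\min}(f^\ast\fE)\geq 0$, the average slope $\mu(f^\ast\fE)=\deg(f^\ast\det E)/r$ is nonnegative, so $\deg f^\ast\det E\geq 0$ for every $f$ and $\det E$ is nef. For condition (1), fix $s$ and a morphism $g\colon C\to\hgrass_s(\fE)$, and set $f=\rho_s\circ g$. Pulling back the universal sequence \eqref{g1} exhibits $g^\ast\fQ_{s,\fE}$ as a Higgs quotient of $f^\ast\fE$; since any quotient of a Higgs quotient of $f^\ast\fE$ is again a quotient of $f^\ast\fE$, one has $\mu_{\min}(g^\ast\fQ_{s,\fE})\geq\mu_{\min}(f^\ast\fE)\geq 0$. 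As $\rk\fQ_{s,\fE}=s<r$, the inductive hypothesis gives that $\fQ_{s,\fE}$ is H-nef, and together with the nefness of $\det E$ this yields that $\fE$ is H-nef.

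For the reverse implication ``$\fE$ H-nef $\Rightarrow$ $\mu_{\min}\geq 0$'', fix $f\colon C\to X$ and let $\fQ$ be the minimal quotient of the Higgs Harder--Narasimhan filtration of $f^\ast\fE$, a semistable locally free Higgs quotient of rank $s$ with $\mu(\fQ)=\mu_{\min}(f^\ast\fE)$. If $s=r$ then $f^\ast\fE$ is semistable and $\mu_{\min}(f^\ast\fE)=\deg(f^\ast\det E)/r\geq 0$ because $\det E$ is nef. If $s<r$, the universal property of $\hgrass_s(\fE)$ produces a lift $\tilde f\colon C\to\hgrass_s(\fE)$ of $f$ with $\tilde f^\ast\fQ_{s,\fE}\cong\fQ$. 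Since $\fE$ is H-nef, $\fQ_{s,\fE}$ is H-nef of rank $s<r$, so the inductive hypothesis gives $\mu_{\min}(\tilde f^\ast\fQ_{s,\fE})\geq 0$; as $\fQ=\tilde f^\ast\fQ_{s,\fE}$ is semistable, $\mu_{\min}(f^\ast\fE)=\mu(\fQ)=\mu_{\min}(\fQ)\geq 0$.

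The main obstacle I anticipate is bookkeeping the base schemes of the induction rather than any deep difficulty: the inductive hypothesis must be applied to $\fQ_{s,\fE}$ over the Higgs Grassmannian $\hgrass_s(\fE)$, which is only a closed subscheme of $\grass_s(E)$ and need not be smooth, so the statement should really be set up for Higgs bundles over the class of projective bases obtained by iterating the Higgs Grassmannian construction (testing nefness and $\mu_{\min}$ only against curves mapping in, which is unaffected by singularities of the base). The other point requiring care is the identification of the minimal Harder--Narasimhan quotient as a genuine locally free rank $s$ Higgs quotient to which the universal property of $\hgrass_s(\fE)$ applies; this rests on saturating the penultimate filtration term so that the quotient is torsion-free, hence locally free on the curve $C$.
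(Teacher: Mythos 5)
Your proof is correct and follows essentially the same route as the paper: one direction feeds the minimal Harder--Narasimhan quotient of $f^{*}\fE$ into the universal property of the Higgs Grassmannian, and the other unwinds the recursive definition of H-nefness and uses that $\mu_{\mathrm{min}}(f^{*}\fE)\geq 0$ forces every Higgs quotient of $f^{*}\fE$ on a curve to have nonnegative degree. Your explicit induction on rank is just a self-contained reformulation of the paper's appeal to the ``flattened'' collection of line bundles $\fL_S$ on iterated Higgs Grassmannians, and the base-scheme bookkeeping issue you flag (singular Grassmannians, Higgs fields valued in $\rho_s^{*}\Omega^1_X$) is present, and equally glossed over, in the paper's own argument.
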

 
\begin{proof} 
Suppose $\fE$ is H-nef. Let $f\colon C \rightarrow X$ be any morphism from a smooth projective irreducible
curve $C$ to $X$. Let
$$0\subset \fE_1\subset \fE_2\subset \cdots \subset \fE_r= f^\ast\fE$$ be the 
Harder-Narasimhan filtration of the pullback of $\fE$ to $C$. Since $\fE$ is H-nef, it follows that $\deg
f^\ast\fE\geq0$. Let $s=\rk (\fE_r/\fE_{r-1})$. By the universal property of the Higgs Grassmannian
$\hgrass_s(f^\ast\fE)$, 
from    the natural quotient morphism $\phi_r\,\colon\, \fE_r\,\longrightarrow\, \fE_r/\fE_{r-1}$ we get a morphism
$C \,\longrightarrow\, \hgrass_s(f^\ast\fE)$ such that the pullback of the universal quotient  on $\hgrass_s(f^\ast\fE)$
coincides with $\phi_r$. By the H-nefness of $\fE$, it follows that 
$\deg ( \fE_r/\fE_{r-1}) \geq 0$.

Conversely, suppose $\fE$ has the property that for any smooth projective irreducible curve $C$ and any
morphism $f\,\colon\, C \,\longrightarrow\, X$, $\mu_{\text{min}}(f^{*}(\fE)) \,\geq\, 0$. 
We want to show that $\fE$ is an  H-nef Higgs bundle on $X$. The assumption on $\fE$ implies that the  degree of
$\fE$ is non-negative on  every curve, so that  $\det f^\ast\fE$ is nef and hence H-nef. 

To prove the other condition in the definition of H-nef bundles,
we recall that, as explained in \cite{bruzzo-grana-adv}, the H-nefness of $\fE$ is equivalent
to the nefness, in the usual sense, of a collection of line bundles $\mathfrak L_S$, each defined on a scheme
$S$ equipped with a projection $\pi_S\,\colon\, S \,\longrightarrow\, X$ (these line bundles are obtained by successively taking
the universal Higgs quotient until one reaches the rank one quotient bundles). Let 
$\psi_S\,\colon\, \pi_S^\ast\fE \,\longrightarrow\, \mathfrak L_S$ denote the quotient morphism, and let $g\colon C \rightarrow
S$ be any morphism, where $C$ is a smooth curve. The pullback of  $\psi_S$ to $C$ produces a  
quotient $f^\ast\fE \,\longrightarrow\, \fF$ on $C$, where $f=\pi_S\circ g$.   Let $\fF'$ denote the kernel of this quotient.
By property (1) of the Harder-Narasimhan filtration explained before, the polygon corresponding to the
filtration $0 \subset  \fF' \subset f^\ast\fE $ lies under the Harder-Narasimhan polygon of $f^\ast\fE $.
Since $\mu_{min}(f^\ast\fE ) \geq0$, this immediately implies that $\deg\fF' \leq \deg  f^\ast\fE$ and hence
$\deg \fF\geq 0$, so that $\mathfrak L_S$ is nef.  This shows that $\fE$ is H-nef, thereby completing the
proof of the lemma.
\end{proof}

\begin{lemma}\label{H-nflat}
  If $f\,\colon\, Y \,\longrightarrow\, X$ is a  surjective morphism of smooth projective
    varieties, and $\fE$ is a Higgs bundle on $X$, then $\fE$ is  H-nef if and only if $f^\ast\fE$ is. 
\end{lemma}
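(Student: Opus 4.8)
The plan is to reduce both implications to the curve criterion of Lemma~\ref{le1}, so that H-nefness of a Higgs bundle on $X$ or on $Y$ becomes the numerical condition that $\mu_{\text{min}}$ of every pullback to a smooth projective irreducible curve be nonnegative. The easy implication is then immediate: suppose $\fE$ is H-nef, and let $g\colon C \to Y$ be any morphism from a smooth projective irreducible curve. Since $g^\ast(f^\ast\fE) = (f\circ g)^\ast\fE$ and $f\circ g$ is a morphism $C \to X$, Lemma~\ref{le1} applied to $\fE$ gives $\mu_{\text{min}}(g^\ast(f^\ast\fE)) \geq 0$; as $g$ is arbitrary, Lemma~\ref{le1} applied to $f^\ast\fE$ shows that $f^\ast\fE$ is H-nef. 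This direction uses neither surjectivity of $f$ nor smoothness.

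For the converse I would assume $f^\ast\fE$ is H-nef and take an arbitrary morphism $g\colon C \to X$ from a smooth projective irreducible curve. Since $g$ need not lift to $Y$, the key step is to replace $C$ by a finite cover that does lift. I would form the fibre product $C\times_X Y$; because $f$ is surjective, the projection $C\times_X Y \to C$ is surjective, so some irreducible component $Z$ dominates $C$. Cutting $Z$ with general hyperplane sections (Bertini) produces an irreducible curve still dominating $C$, whose normalization $C'$ is a smooth projective curve carrying a finite surjective morphism $h\colon C' \to C$, say of degree $d$, together with a morphism $g'\colon C' \to Y$ satisfying $f\circ g' = g\circ h$. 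This construction — choosing a component dominating $C$ and cutting down to a curve that still dominates $C$ — is the main obstacle; everything else is formal.

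Finally I would transfer the positivity back along the finite cover. Writing $\fG := g^\ast\fE$, one has $g'^\ast(f^\ast\fE) = (g\circ h)^\ast\fE = h^\ast\fG$, so H-nefness of $f^\ast\fE$ and Lemma~\ref{le1} give $\mu_{\text{min}}(h^\ast\fG) \geq 0$. Let $\fG \twoheadrightarrow \fG/\fG_{r-1}$ be the minimal quotient in the Harder-Narasimhan filtration of $\fG$, which has slope $\mu_{\text{min}}(\fG)$. Its pullback $h^\ast(\fG/\fG_{r-1})$ is a nonzero Higgs quotient of $h^\ast\fG$ of slope $d\cdot\mu_{\text{min}}(\fG)$, since $h$ is finite of degree $d$ (ranks are unchanged and degrees are multiplied by $d$). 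Because $\mu_{\text{min}}$ of a Higgs bundle is at most the slope of any of its nonzero Higgs quotients, we obtain $0 \leq \mu_{\text{min}}(h^\ast\fG) \leq d\cdot\mu_{\text{min}}(\fG)$, and hence $\mu_{\text{min}}(g^\ast\fE) = \mu_{\text{min}}(\fG) \geq 0$. As $g$ was arbitrary, Lemma~\ref{le1} shows that $\fE$ is H-nef, which completes the proof. The only points requiring care beyond the cover construction are that the Higgs structure and the Harder-Narasimhan quotient pull back along $h$ and that slopes scale by $d$, both of which are standard for finite morphisms between smooth projective curves.
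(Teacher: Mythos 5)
Your proof is correct and follows essentially the same route as the paper's: the easy direction by composing with $f$ and invoking the curve criterion of Lemma~\ref{le1}, and the converse by lifting an arbitrary $g\colon C\to X$ to a finite covering curve $C'\to C$ that maps to $Y$, then descending the inequality $\mu_{\min}\ge 0$ along the finite map. You in fact supply more detail than the paper, which merely asserts the existence of the covering curve $\tilde C$ and the descent of $\mu_{\min}$ under a finite morphism, both of which you justify (via the fibre product, Bertini, and the behaviour of the Harder--Narasimhan minimal quotient under pullback).
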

\begin{proof}
 Suppose $\fE$ is H-nef. Consider $f^*(\fE)$. Let $\psi: C \rightarrow Y $ be any morphism from a smooth projective curve $C$ to $Y$. Then 
 deg$(\psi^*f^*(\fE))= \text{deg}((f\circ\psi)^*(\fE)) \geq0$ by H-nefness of $\fE$. Hence det$(f^*(\fE))$ is nef. Also $\mu_{min}(\psi^*f^*(\fE))=\mu_{min} ((f\circ\psi)^*(\fE)) \geq0$ by H-nefness of $\fE$ thereby showing that $f^*(\fE)$ is H-nef.
 Conversely suppose $f^*(\fE)$ is H-nef on $Y$. Let $\phi: C \rightarrow X$ be any morphism from a smooth projective curve to $X$.
 Then there exists a surjective morphism from a smooth projective curve $g: \tilde{C} \rightarrow C$ and a morphism $\tilde{\phi}: \tilde{C} \rightarrow Y$ lying 
 over the morphism $\phi$. Then deg$(f \circ \tilde{\phi})^*(\fE)\geq0$ by H-nefness of $f^*(\fE)$ and hence by the commutativity of the diagram, deg$(\phi \circ g)^*{\fE} \geq0$. Since $g$ is a finite morphism this shows that deg$(\phi^*(\fE))\geq0$ thus proving that $det(\fE)$ is nef.
Similarly $\mu_{min}(f \circ \tilde{\phi})^*(\fE) \geq 0 $ by H-nefness of $f^*(\fE)$ and hence by the commutativity of the diagram,
 $\mu_{min}(\phi \circ g)^*(\fE) \geq 0$. Since $g$ is a finite morphism, it follows that $\mu_{min}(\phi^*(\fE))\geq 0$ as well.
\end{proof}
     
\begin{lemma}
  Every quotient Higgs bundle of an H-nef Higgs bundle
    $\fE$ on $X$ is H-nef.
\end{lemma}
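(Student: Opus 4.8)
The plan is to reduce the statement to the curve criterion of Lemma~\ref{le1} and then exploit the monotonicity of $\mu_{\min}$ under passage to Higgs quotients on a curve. Let $\fE\,\longrightarrow\,\fQ\,\longrightarrow\,0$ be a surjection of Higgs bundles, and fix an arbitrary morphism $f\colon C\,\longrightarrow\, X$ from a smooth projective irreducible curve $C$. By Lemma~\ref{le1}, applied to $\fQ$, it suffices to prove that $\mu_{\min}(f^\ast\fQ)\,\geq\,0$. Pulling the surjection back along $f$, I obtain a surjection $f^\ast\fE\,\longrightarrow\, f^\ast\fQ\,\longrightarrow\,0$ of Higgs bundles on $C$, since pullback is exact and carries the Higgs field of $\fE$ to that of $\fQ$ compatibly with the quotient map.

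The key step is the inequality $\mu_{\min}(f^\ast\fQ)\,\geq\,\mu_{\min}(f^\ast\fE)$. I plan to deduce this from the characterization of $\mu_{\min}$ as the minimal slope among all nonzero Higgs quotients, which is the Higgs analogue of the standard Harder-Narasimhan property recalled before Lemma~\ref{le1}. Concretely, let $\fR$ be the bottom factor of the Harder-Narasimhan filtration of $f^\ast\fQ$, so that $\fR$ is a Higgs quotient of $f^\ast\fQ$ with $\mu(\fR)\,=\,\mu_{\min}(f^\ast\fQ)$. Composing with $f^\ast\fE\,\longrightarrow\, f^\ast\fQ$ exhibits $\fR$ as a Higgs quotient of $f^\ast\fE$ as well, whence $\mu(\fR)\,\geq\,\mu_{\min}(f^\ast\fE)$; this yields the desired inequality. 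Alternatively, one can phrase the same argument through the Harder-Narasimhan polygons using property~(2) listed above: letting $\fK$ denote the kernel of $f^\ast\fE\,\longrightarrow\, f^\ast\fQ$, the polygon of the filtration $0\,\subset\,\fK\,\subset\, f^\ast\fE$ lies under the polygon of $f^\ast\fE$, and this controls the bottom slope of the quotient $f^\ast\fQ$ from below by $\mu_{\min}(f^\ast\fE)$.

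Finally, since $\fE$ is H-nef, Lemma~\ref{le1} gives $\mu_{\min}(f^\ast\fE)\,\geq\,0$, and combining with the previous step yields $\mu_{\min}(f^\ast\fQ)\,\geq\,0$. As $f$ was an arbitrary morphism from a smooth projective irreducible curve, Lemma~\ref{le1} applied to $\fQ$ shows that $\fQ$ is H-nef.

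I expect the only delicate point to be the identification of $\mu_{\min}$ with the minimal slope of Higgs quotients, that is, the verification that a bottom-slope destabilizing quotient $\fR$ of $f^\ast\fQ$ is genuinely a \emph{Higgs} quotient (compatible with the Higgs field) of $f^\ast\fE$. This is, however, immediate once the Harder-Narasimhan filtration is taken by Higgs subsheaves, so that each factor is a Higgs quotient, and it follows from the properties of the Harder-Narasimhan filtration for Higgs bundles on curves recalled in the excerpt; the rest of the argument is formal.
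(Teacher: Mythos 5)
Your proposal is correct and follows essentially the same route as the paper: both reduce to curves, observe that a Higgs quotient of $f^\ast\fQ$ is a Higgs quotient of $f^\ast\fE$, and deduce $\mu_{\min}(f^\ast\fQ)\ge \mu_{\min}(f^\ast\fE)\ge 0$ from the Harder--Narasimhan properties, concluding by Lemma~\ref{le1}. The only cosmetic difference is that the paper also verifies the determinant condition separately, whereas you let Lemma~\ref{le1} absorb that check.
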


\begin{proof}
  Let $\fE \twoheadrightarrow \fE''$ be a non-trivial Higgs quotient. Let $\fE'$ denote the kernel. 
 Let $f:C \rightarrow X$ be a morphism from a smooth curve $C$ to $X$. By the property of the (Higgs) Harder-Narasimhan filtration
 mentioned earlier, $\mu_{min}(f^*(\fE))\geq0$ and hence deg$(f^*(\fE'))\leq$ deg$(f^*(\fE))$. Thus deg$(f^*(\fE''))\geq0$ proving that det$(\fE'')$ is nef. 
 To prove the second condition, let $f^*(\fE'') \rightarrow \fF$
 be a Higgs quotient. Then $\fF$ is also a Higgs quotient of $f^*(\fE)$ and hence by H-nefness of $\fE$, deg$(\fF)\geq 0$. This shows that $\mu_{min}(f^*(\fE''))\geq 0$
thus completing the proof that $\fE''$ is H-nef as well. 
\end{proof}

The remaining results in this section will be the key to prove that H-nflat Higgs bundles make up a Tannakian category. 
\begin{thm}
Let $X$ be a smooth projective variety. Let $\fE$ and $\fF$ be two H-nef bundles on $X$. Then $\fE\otimes \fF$
is also H-nef.
\end{thm}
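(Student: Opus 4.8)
The plan is to reduce the tensor product statement for H-nef Higgs bundles on $X$ to the corresponding numerical statement on curves, where the multiplicativity of the Harder--Narasimhan slopes can be exploited. By Lemma \ref{le1}, it suffices to show that for every morphism $f\,\colon\, C\,\longrightarrow\, X$ from a smooth projective irreducible curve $C$, one has $\mu_{\text{min}}(f^\ast(\fE\otimes\fF))\,\geq\, 0$. Since tensor product commutes with pullback, we have $f^\ast(\fE\otimes\fF)\,\simeq\, f^\ast\fE\otimes f^\ast\fF$, so the problem is localized to the two pulled-back Higgs bundles on the single curve $C$.

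First I would record what H-nefness of $\fE$ and $\fF$ gives us on $C$. Applying Lemma \ref{le1} in the forward direction to each of $\fE$ and $\fF$ yields $\mu_{\text{min}}(f^\ast\fE)\,\geq\, 0$ and $\mu_{\text{min}}(f^\ast\fF)\,\geq\, 0$. The next step is the crucial one: I invoke property (1) of the Harder--Narasimhan filtration for Higgs bundles on a curve, recalled just before Lemma \ref{le1}, which states that
$$
\mu_{\text{min}}(\fE\otimes\fF)\,=\,\mu_{\text{min}}(\fE)+\mu_{\text{min}}(\fF)
$$
for Higgs bundles on a smooth projective curve. Applying this to $f^\ast\fE$ and $f^\ast\fF$ on $C$ gives
$$
\mu_{\text{min}}(f^\ast\fE\otimes f^\ast\fF)\,=\,\mu_{\text{min}}(f^\ast\fE)+\mu_{\text{min}}(f^\ast\fF)\,\geq\, 0\, .
$$
Combining this with the identification $f^\ast(\fE\otimes\fF)\,\simeq\, f^\ast\fE\otimes f^\ast\fF$ shows $\mu_{\text{min}}(f^\ast(\fE\otimes\fF))\,\geq\, 0$, and since $C$ and $f$ were arbitrary, Lemma \ref{le1} (in the reverse direction) lets me conclude that $\fE\otimes\fF$ is H-nef.

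The main obstacle, and the only genuinely nontrivial input, is the multiplicativity of $\mu_{\text{min}}$ under tensor product for Higgs bundles on a curve. In the classical torsion-free case this rests on the fact that the tensor product of semistable sheaves is semistable, and for the Higgs analogue the corresponding statement over a curve is exactly what is cited from \cite{balaji-para} and used in the proof of Proposition \ref{results}(iv); this is what makes property (1) of the Higgs Harder--Narasimhan filtration legitimate. I would therefore make sure the appeal to property (1) is clean, noting that it is precisely the slope additivity that encodes semistability of the tensor product of the extreme Harder--Narasimhan graded pieces. Everything else in the argument is formal: the compatibility of tensor product with pullback and two applications of Lemma \ref{le1}. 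One subtlety worth flagging is that Lemma \ref{le1} characterizes H-nefness purely through $\mu_{\text{min}}$ on curves and thereby already absorbs the determinant condition of Definition \ref{moddef}, so there is no separate verification that $\det(E\otimes F)$ is nef --- it follows automatically from the slope inequality on every curve.
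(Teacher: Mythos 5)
Your proof is correct and follows essentially the same route as the paper: reduce via Lemma \ref{le1} to checking $\mu_{\text{min}}(f^\ast(\fE\otimes\fF))\geq 0$ on every curve, and conclude by additivity of $\mu_{\text{min}}$ under tensor product. (The paper cites ``property 2'' of the Harder--Narasimhan filtration at this step, but that is evidently a slip for property 1, which is the one you correctly invoke.)
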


\begin{proof}
Let $f\colon C \rightarrow X$ be any smooth projective curve mapping to $X$. Since $\fE$ and $\fF$ are both
H-nef,
$\mu_{\text{min}}(f^\ast\fE)$ and $\mu_{\text{min}}(f^\ast\fF)$ are both non-negative. By property 2 of the
Harder-Narasimhan-filtration explained earlier,  $\mu_{\text{min}}(f^\ast(\fE\otimes \fF))\geq 0$.
Hence by Lemma \ref{le1}, the tensor product $\fE\otimes \fF$ is also H-nef.
\end{proof}

Finally, we have the following property of morphisms between H-nflat Higgs bundles.

\begin{prop}\label{kerncokern}
Let $\beta \,\colon\, \fE\,=\, (E,\,\phi)\,\longrightarrow\,\fF\,=\,(F,\,\psi)$ be a morphism of H-nflat Higgs bundles
on a smooth projective variety $X$. The kernel and cokernel of $\beta$ are both locally free. 
\end{prop}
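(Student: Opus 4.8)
The plan is to reduce the whole statement to a single assertion: that the fibrewise rank $x\mapsto \rk(\beta_x)$ of $\beta$ is \emph{constant} on $X$. Granting this, local freeness follows formally. Since tensoring by a residue field $k(x)$ is right exact, $\coker(\beta)\otimes k(x)=\coker(\beta_x)$ has dimension $\rk F-\rk(\beta_x)$, which is then constant; as $X$ is smooth, hence reduced, the standard criterion (a coherent sheaf on a reduced scheme whose fibre dimension is locally constant is locally free) forces $\coker(\beta)$ to be locally free. Writing $\fI=\operatorname{im}(\beta)$, the exact sequence $0\to \fI\to \fF\to \coker(\beta)\to 0$ then exhibits $\fI$ as a subsheaf with locally free quotient, so it is locally split and $\fI$ is locally free; finally $0\to \ker(\beta)\to \fE\to \fI\to 0$ with $\fE,\fI$ locally free shows $\ker(\beta)$ is locally free. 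Thus everything rests on the constancy of $\rk(\beta_x)$.

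The rank function is lower semicontinuous, so it equals its generic value $\rho$ off a proper closed subset $Z\subset X$, and I must show $Z=\emptyset$. This is where H-nflatness enters, via restriction to curves. For any morphism $f\colon C\to X$ from a smooth projective irreducible curve, H-nflatness of $\fE$ means both $\fE$ and $\fE^\ast$ are H-nef, so Lemma \ref{le1} applied to $\fE$ and to $\fE^\ast$ gives $\mu_{\min}(f^\ast\fE)\ge 0$ and $\mu_{\max}(f^\ast\fE)=-\mu_{\min}(f^\ast\fE^\ast)\le 0$; hence $f^\ast\fE$ is semistable of degree $0$, and likewise $f^\ast\fF$.

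The key step is to prove that on every such curve $\rk(f^\ast\beta)$ is constant. Let $J=\operatorname{im}(f^\ast\beta)\subset f^\ast F$, a Higgs subsheaf with the induced field. As a Higgs quotient of the semistable degree-$0$ bundle $f^\ast\fE$ one has $\mu(J)\ge 0$, while as a Higgs subsheaf of the semistable degree-$0$ bundle $f^\ast\fF$ one has $\mu(J)\le 0$; thus $\mu(J)=0$. Its saturation $\bar J$ in $f^\ast F$ is again a Higgs subsheaf, since the torsion of $f^\ast F/J$ is preserved by the induced field (as $\Omega_C^1$ is locally free), whence $\mu(\bar J)\le 0=\mu(J)$; as $\bar J\supseteq J$ have equal rank this forces $\bar J=J$. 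Therefore $J$ is saturated, $\coker(f^\ast\beta)=f^\ast F/J$ is torsion-free, and so $\rk(f^\ast\beta)$ is constant along $C$. Since $(f^\ast\beta)_c=\beta_{f(c)}$, the rank of $\beta$ is constant along the image of every curve. To globalise, suppose $Z\neq\emptyset$, pick $x_0\in Z$, choose an irreducible curve $C_0\subset X$ through $x_0$ meeting the dense open $X\setminus Z$, and let $f\colon C\to X$ be its normalisation; then $\rk(\beta_{f(c)})$ assumes both the value $\rho$ and a strictly smaller value along $C$, contradicting the constancy just proved. Hence $Z=\emptyset$ and $\rk(\beta_x)\equiv\rho$.

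The main obstacle is the curve-level saturation argument: it is precisely the two-sided slope bound $0\le\mu_{\min}(f^\ast\fE)$ and $\mu_{\max}(f^\ast\fE)\le 0$ — coming from the H-nef-ness of \emph{both} $\fE$ and its dual — that pins $\mu(J)$ to $0$ and forces the image to be saturated, thereby preventing the cokernel from acquiring torsion. Once this is in place, both the implication ``constant fibre rank $\Rightarrow$ local freeness'' and the passage from curves back to $X$ are routine.
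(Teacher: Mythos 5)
Your proof is correct and follows essentially the same route as the paper: reduce to showing the fibrewise rank of $\beta$ is constant, restrict to curves where H-nflatness (via Lemma \ref{le1}) gives that both bundles are Higgs-semistable of degree zero, pin the degree of the image to zero from the two-sided slope bounds, and use semistability to rule out torsion in the cokernel. You merely spell out more explicitly the two reductions the paper leaves implicit (constant fibre rank implies local freeness, and the passage from curves back to $X$).
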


\begin{proof}
The proposition is equivalent to the statement that
$\dim \beta(E_x)$ is independent of $x\, \in\, X$. Therefore, it suffices to show
the following: for every pair $(C\, ,f)$, where
$C$ is a smooth projective curve and $f: C\rightarrow X$ is a morphism,
the image $(f^*\beta) (f^*E)$ is a subbundle of $f^*F$.

{}From Lemma \ref{le1} we know that $\fE$ and $\fF$ are Higgs semistable of degree zero.
Therefore, it is enough to prove the proposition for smooth projective curves.

So take $X$ to be a smooth projective curve. Take semistable Higgs
bundles $\fE= (E,\phi)$ and $\fF=(F,\psi)$ of degree zero on $X$, and let
$\beta \,\colon\, \fE\,\longrightarrow\,\fF$ be a nonzero homomorphism. 
Since $\beta(E)$ is a quotient of $E$ (respectively, subsheaf of $F$), we have
$\deg \beta(E)\geq 0$ (respectively, $\deg \beta(E)\leq 0$). Therefore, it follows that
\begin{equation}\label{g2}
\deg \beta(E) \,=\, 0\, .
\end{equation}

Next, we will show that the quotient $F/\beta(E)$ is torsion-free. Let $T$
be the torsion part of $F/\beta(E)$. Let $F'$ be the inverse image of $T$ in $F$.
We have
$$
\deg F' \,= \, \deg \beta(E) + \deg T \,=\, \deg T\, .
$$
So if $T\,\not=\, 0$, then $\deg F'\,=\, \deg T\, > \,0$, and hence in this case
$F'$ contradict the semistability condition for $\fF$. Consequently, we have
$T \,=\, 0$. This implies that $\beta(E)$ is a subbundle of $F$.
\end{proof}

\begin{prop}\label{kerncokern2}
Let $\beta \,\colon\, \fE\,=\, (E,\,\phi)\,\longrightarrow\,\fF\,=\,(F,\,\psi)$ be a morphism of H-nflat Higgs bundles
on a smooth projective variety $X$. The kernel and cokernel of $\beta$ are
H-nflat Higgs bundles.
\end{prop}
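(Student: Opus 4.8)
The plan is to reduce the statement to the case of curves and there exploit the behaviour of semistability under passage to sub-objects and quotients. First I would invoke Proposition \ref{kerncokern}, which guarantees that $\fK := \ker\beta$ and $\coker\beta$ are locally free; writing $\fI := \beta(\fE)$ for the image and denoting the cokernel by $\fG$, one then has two genuine short exact sequences of \emph{Higgs} bundles
$$0 \,\longrightarrow\, \fK \,\longrightarrow\, \fE \,\longrightarrow\, \fI \,\longrightarrow\, 0, \qquad 0 \,\longrightarrow\, \fI \,\longrightarrow\, \fF \,\longrightarrow\, \fG \,\longrightarrow\, 0,$$
all maps being Higgs morphisms because $\beta$ is: the kernel is $\phi$-invariant, the image is $\psi$-invariant, and the cokernel inherits $\psi$.

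Second, I would record the curve characterisation of H-nflatness that is implicit in the earlier results: applying Lemma \ref{le1} to $\fE$ and to $\fE^\ast$ (and using that the Harder--Narasimhan filtration dualises, so $\mu_{\min}(f^\ast\fE^\ast)=-\mu_{\max}(f^\ast\fE)$) shows that an H-nflat bundle satisfies $\mu_{\min}(f^\ast\fE)=\mu_{\max}(f^\ast\fE)=0$, i.e. $f^\ast\fE$ is semistable of degree zero for every morphism $f\colon C\to X$ from a smooth projective curve; conversely, Proposition \ref{results}(ii) shows that a Higgs bundle with $c_1\equiv 0$ whose every curve-pullback is semistable is H-nflat. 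In particular $f^\ast\fE$ and $f^\ast\fF$ are semistable of degree zero for every such $f$.

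Third, fixing $f\colon C\to X$ and pulling back both sequences, I would run the standard slope bookkeeping. The bundle $f^\ast\fI$ is simultaneously a Higgs quotient of $f^\ast\fE$ and a Higgs subsheaf of $f^\ast\fF$, so degree-zero semistability of the latter two forces $\deg f^\ast\fI\ge 0$ and $\deg f^\ast\fI\le 0$, whence $\deg f^\ast\fI=0$ and then $\deg f^\ast\fK=\deg f^\ast\fG=0$. For semistability I would use that in a slope-zero semistable Higgs bundle every Higgs subsheaf has slope $\le 0$ and every Higgs quotient has slope $\ge 0$: a Higgs subsheaf of $f^\ast\fK$ is a Higgs subsheaf of $f^\ast\fE$, so the slope-zero bundle $f^\ast\fK$ is semistable; dually, a Higgs quotient of $f^\ast\fG$ is a Higgs quotient of $f^\ast\fF$, so the slope-zero bundle $f^\ast\fG$ is semistable. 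As $f$ was arbitrary and $c_1(K),c_1(G)$ are numerically trivial (degree zero on all curves), the characterisation of the previous paragraph yields that $\fK$ and $\fG$ are H-nflat.

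The computations are routine; the one point requiring care — and the main obstacle — is that every sub-object/quotient argument must be carried out in the Higgs category, so that it is Higgs-semistability, not ordinary semistability, that is being propagated. Concretely one must ensure that the relevant subsheaves of $f^\ast\fK$ and quotients of $f^\ast\fG$ are $\phi$- (respectively $\psi$-) invariant, so that they qualify as destabilising objects for $f^\ast\fE$ and $f^\ast\fF$. This is automatic here, since kernels, images and cokernels of Higgs morphisms are canonically Higgs sub/quotient objects and a Higgs subsheaf of $f^\ast\fK$ is \emph{a fortiori} a Higgs subsheaf of $f^\ast\fE$; once this is granted, the argument closes exactly as above.
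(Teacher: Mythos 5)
Your proposal is correct and follows essentially the same route as the paper: invoke Proposition \ref{kerncokern} for local freeness, reduce to curves where $\fE$ and $\fF$ are Higgs-semistable of degree zero, deduce that the image (and hence kernel and cokernel) has degree zero, inherit semistability from sub/quotient relations in the Higgs category, and conclude via Proposition \ref{results}. You merely spell out some steps the paper leaves implicit (the dualization identity for $\mu_{\min}$ and the explicit subsheaf/quotient slope argument), which is a welcome but not substantively different elaboration.
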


\begin{proof}
{}From Proposition \ref{kerncokern} we know that both kernel and cokernel of $\beta$
are locally free. As in the proof of Proposition \ref{kerncokern}, take $X$ to be
a smooth projective curve. Then by proposition \ref{results}, $\fE$ and $\fF$ are semistable of degree $0$. 
Since $\deg E\,=\,0$, from eq.\,\eqref{g2} it follows immediately
that $\deg (\ker~\beta)\,=\, 0$ if $\beta\,\not=\, 0$. Similarly, since
$\deg F\,=\,0$, from eq.\,\eqref{g2} it follows immediately
that $\deg (\coker~\beta)\,=\, 0$ if $\beta\,\not=\, 0$. Since $E$ and $F$   are Higgs-semistable of degree zero,
and $\ker~\beta$ and $\coker~\beta$ are of degree $0$, it follows that $\ker~\beta$ and $\coker~\beta$ are also Higgs-semistable of degree zero. Since the pullbacks of  $\ker~\beta$ and $\coker~\beta$ to  any smooth curve are Higgs semistable of 
degree $0$, by Proposition \ref{results}, it follows that 
both kernel and cokernel of $\beta$ are H-nflat Higgs bundles.
\end{proof}

\bigskip
\section{Categories of numerically flat bundles}\label{last}
  
\begin{defin} Given a smooth projective variety $X$ over a field $k$ of characteristic zero, we consider the
following categories.
\begin{enumerate} \item The category $\mathbf{NF}(X)$ whose objects are numerically flat vector bundles on
$X$, and morphisms are morphisms of vector bundles (i.e., kernel and cokernel are local free).
\item The category $\mathbf{HNF}(X)$ whose objects are H-numerically flat Higgs bundles on $X$, and
morphisms are morphisms of Higgs bundles (i.e., kernel and cokernel are locally free, and the kernel is
invariant under the Higgs field.).
\end{enumerate}
\end{defin}
$\mathbf{NF}(X)$ and $\mathbf{HNF}(X)$ are Abelian categories (the case of $\mathbf{HNF}(X)$ follows as a consequence of  Proposition \ref{kerncokern}), and $\mathbf{NF}(X)$ is a proper subcategory
of $\mathbf{HNF}(X)$. Both are tensor categories (cf.\ in particular Proposition \ref{results} (iv)). Moreover,
they are rigid in the sense of \cite{deligne-milne}, Definition 1.7.

We remind the reader that a {\em neutral Tannakian category over a field $k$} is a rigid Abelian $k$-linear
tensor category $\mathbf C$
together with a faithful $k$-linear tensor functor $\omega\colon\mathbf C\,\longrightarrow\, \mathbf{Vect}_k$. Here
$\mathbf{Vect}_k$ is the category of $k$-vector spaces, and $\omega$ is called the {\em fiber functor.} Then,
there exists an affine group scheme $G$ over $k$ such that $\mathbf C$
is equivalent to the category $\mathbf{Rep}_k(G)$ of $k$-linear representations of $G$ (see
\cite{deligne-milne}).

 $\mathbf{HNF}(X)$ is indeed a neutral Tannakian category (with $\omega$ the functor that associates to an
H-flat Higgs bundle $\fE=(E,\phi)$ its fiber $E_x$ at a fixed point $x\in X$), so that the following
definition makes sense.

\begin{defin} Let $x\in X$. The {\em Higgs fundamental group scheme} $\pi_1^H(X,x)$ is the affine group scheme
representing 
the category  $\mathbf{HNF}(X)$ with the fiber functor $\fE \,\longmapsto\, E_x$.
\end{defin}

If $\pi_1^S(X,x)$ is the   fundamental group scheme associated with the category $\mathbf{NF}(X)$  \cite{langer-fund-I}, the
inclusion
$\mathbf{NF}(X)\hookrightarrow \mathbf{HNF}(X)$ induces a  faithfully flat homomorphism of group schemes $\pi_1^H(X,x)
\,\longrightarrow\,\pi_1^S(X,x)$.

We conclude this paper by giving a few properties of the Higgs fundamental group scheme. A more thorough
study of this group will form the object of a future paper. 

\begin{prop} Let $f\,\colon\, X'\,\longrightarrow\, X$ be a surjective, flat morphism of projective varieties over $k$. If
$f_\ast\cO_{X'}\,\simeq\,\cO_X$ and $f(x')\,=\,x$, then
the induced morphism $\pi_1^H(X',\,x') \,\longrightarrow\, \pi_1^H(X,\,x)$ is a surjective faithfully flat morphism.
\end{prop}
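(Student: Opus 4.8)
The plan is to realise the homomorphism $u\colon \pi_1^H(X',x')\to\pi_1^H(X,x)$ as the map of Tannaka dual groups attached to the pullback functor $f^\ast\colon\mathbf{HNF}(X)\to\mathbf{HNF}(X')$, and to verify the two conditions of the recognition criterion of \cite{deligne-milne} (Proposition 2.21(a)) that characterise faithful flatness: (a) $f^\ast$ is fully faithful, and (b) every Higgs subbundle of $f^\ast\fE$ in $\mathbf{HNF}(X')$ is the image under $f^\ast$ of a Higgs subbundle of $\fE$ in $\mathbf{HNF}(X)$. For affine group schemes over a field, faithful flatness and surjectivity coincide, so this suffices. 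First I would check that $f^\ast$ is a well-defined tensor functor: by Lemma \ref{H-nflat}, applied to $\fE$ and to $\fE^\ast$, it sends H-nflat bundles to H-nflat bundles; it sends Higgs morphisms to Higgs morphisms; and since $f(x')=x$ it is compatible with the fibre functors, so it induces $u$.

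For (a), a morphism $\fE\to\fF$ in $\mathbf{HNF}(X)$ is a global section of the internal-hom Higgs bundle $\LHom(\fE,\fF)$ annihilated by the induced Higgs field $\Phi$. The hypothesis $f_\ast\cO_{X'}\simeq\cO_X$ and the projection formula give an isomorphism $f^\ast\colon H^0(X,\mathcal V)\xrightarrow{\ \sim\ }H^0(X',f^\ast\mathcal V)$ for every locally free $\mathcal V$, so $f^\ast$ is bijective on the global sections of $\LHom(\fE,\fF)$ and of $\LHom(\fE,\fF)\otimes\Omega^1_X$. The Higgs field upstairs is $\Phi'(f^\ast s)=(\id\otimes df)\,f^\ast\Phi(s)$, where $df\colon f^\ast\Omega^1_X\to\Omega^1_{X'}$ is the codifferential. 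Since $k$ has characteristic zero and $f$ is dominant, $df$ is injective as a morphism of locally free sheaves; hence $\id\otimes df$ is injective and $f^\ast s\in\ker\Phi'$ holds exactly when $s\in\ker\Phi$. Thus $f^\ast$ is bijective on Hom-spaces, proving full faithfulness.

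For (b), let $\fE\in\mathbf{HNF}(X)$ and let $\fS\subset f^\ast\fE$ be a Higgs subbundle, with quotient $\fQ$. The plan is to show $\fS$ is constant along the fibres of $f$ and then to descend it. For a closed point $x\in X$ let $Y_x=f^{-1}(x)$; the condition $f_\ast\cO_{X'}\simeq\cO_X$ forces the fibres to be connected with $H^0(Y_x,\cO_{Y_x})=k$. The restriction $(f^\ast\fE)|_{Y_x}$ is the trivial Higgs bundle $E_x\otimes\cO_{Y_x}$, because the codifferential kills the conormal directions of the fibre, so the induced Higgs field on $Y_x$ vanishes. Consequently $\fS|_{Y_x}$ and $\fQ|_{Y_x}$ are ordinary numerically flat bundles on $Y_x$ (Higgs semistable of degree zero on every curve of $Y_x$, with zero Higgs field), and $\fQ|_{Y_x}$, being a quotient of the trivial bundle, is in addition globally generated. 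The key sub-step is that a globally generated numerically flat vector bundle on $Y_x$ is trivial; granting this, the surjection $E_x\otimes\cO_{Y_x}\twoheadrightarrow\fQ|_{Y_x}\cong\cO_{Y_x}^{\oplus q}$ is given by a matrix over $H^0(Y_x,\cO_{Y_x})=k$, hence is a constant linear surjection $E_x\to k^{\,q}$, whose kernel is $W_x\otimes\cO_{Y_x}$ for a fixed subspace $W_x\subset E_x$, so $\fS|_{Y_x}=W_x\otimes\cO_{Y_x}$. Since $h^0(Y_x,\fQ|_{Y_x})=q$ is independent of $x$, cohomology and base change make $f_\ast\fQ$ locally free; the kernel $\fS_0$ of the induced surjection $\fE=f_\ast f^\ast\fE\to f_\ast\fQ$ is then a subbundle with fibre $W_x$, it is $\phi$-invariant (again because $\id\otimes df$ is injective), and $f^\ast\fS_0=\fS$. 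Finally $\fS_0$ is H-nflat by Lemma \ref{H-nflat}, since $f^\ast\fS_0$ and $f^\ast\fS_0^\ast$ are H-nef.

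The main obstacle is the fibrewise triviality statement in (b): that a globally generated numerically flat bundle on a connected projective scheme $Y$ with $H^0(Y,\cO_Y)=k$ is trivial. I would prove it by applying the constant-rank property of morphisms of numerically flat bundles --- the analogue for ordinary bundles of Proposition \ref{kerncokern}, proved by reduction to curves --- to the evaluation map $H^0(Y,W)\otimes\cO_Y\to W$: a nonzero section generates a line subbundle and so vanishes nowhere, whence the evaluation is fibrewise injective and $h^0(W)\le\rk W$; global generation forces equality, the evaluation becomes a fibrewise isomorphism, and it is then an isomorphism by Nakayama. The delicate points are that the fibres $Y_x$ need be neither smooth nor reduced --- which is why one works only with curves inside them and with $H^0(Y_x,\cO_{Y_x})=k$ --- and that connectedness of the fibres, guaranteed precisely by $f_\ast\cO_{X'}\simeq\cO_X$, is what propagates the rank computation across each fibre.
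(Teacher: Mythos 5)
Your proof is correct and follows essentially the same route as the paper's: invoke \cite[Prop.~2.21(a)]{deligne-milne} and descend an H-nflat subobject of $f^\ast\fE$ by showing it is trivial on the fibres of $f$ (via global generation plus numerical flatness), then push forward using Grauert and $f_\ast\cO_{X'}\simeq\cO_X$. The only differences are elaborations — you verify full faithfulness explicitly, work with the quotient rather than the dual of the subbundle, and spell out the vanishing of the Higgs field on fibres and the $\phi$-invariance of the descended subbundle — all of which the paper leaves implicit.
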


\begin{proof}
 By \cite[Prop.~2.21(a)]{deligne-milne}, it suffices to show that if $E$ is an H-numerically flat bundle on 
$X$ and $F'\subset f^* E\,=\,E'$ (say) is an H-numerically flat subbundle of $E'$ on $X'$, then 
there exists an H-numerically flat subbundle $F \,\subset \,E$ on $X$ such that $f^* F\,=\, F'$. 
Fix $y \in X$. Let $E_y$ (respectively, $E'_y,F'_y$) denote the restrictions of $E$ (respectively, $E'$, 
$F'$) to $y$ (respectively, $X'_y$). Consider the surjection $E_y^{\prime*} \twoheadrightarrow 
F_y^{\prime*}$ corresponding to the inclusion $F'_y \subseteq E'_y$. Since $E'_y$ is trivial and 
hence globally generated, it follows that $F_y^{\prime*}$ is globally generated as well. But since 
$c_1(F'_y)$ is numerically equivalent to zero, it follows that any section of $F_y^{\prime*}$ has 
no zero's and hence $F'^{*}$ and therefore $F'$ is trivial on the fibers of $f$. Since by flatness of $f$,
$h^0(F'|_{X'_y})$ is independent of $y\in X$, by Grauert's theorem it follows that $f_* F'$ is 
locally free. This and the given condition that $f_\ast\cO_{X'}\,\simeq\,\cO_X$ together
imply that the natural map $f^*f_*F' \,\longrightarrow\, F'$ is an isomorphism 
of bundles. Taking $F$ to be $f_*F'$ thereby produces a subbundle $F\subseteq E$ such that $f^*F$ is 
isomorphic to $F'$. It is easy to see that $F$ is invariant under the Higgs field on $E$.
The vector bundle $F$ equipped with the induced Higgs field is also H-nflat by Proposition \ref{H-nflat} since its pullback under $f$ is H-nflat, thereby completing the 
proof of the proposition.
\end{proof}

We also mention the following facts.

\begin{itemize}\item If $\pi^H_1(X,x) = \{e\}$, the category $\mathbf{HNF}(X)$ is equivalent
to the category $\mathbf{Vect_k}$ of finite-dimensional vector spaces. As a consequence,
all H-nflat Higgs bundles are trivial.
\item If the natural morphism $\pi^H_1(X,x) \to \pi^S_1(X,x) $ is an isomorphism,
the categories $\mathbf{HNF}(X)$ and $\mathbf{NF}(X)$ are equivalent. This means that
all H-nflat Higgs bundles   only have zero Higgs field, which also
implies that the Conjecture \ref{conj} holds true.  \end{itemize}

In \cite{bruzzo-logiudice} a characterization was given of some classes of varieties for which the conjecture holds (basically, varieties with nef tangent bundle).

Let $X$, $Y$ be projective varieties over $k$, and let $x$, $y$ be points in $X$, $Y$, respectively.
Given Higgs bundles $(E,\, \theta)$ and $(F,\, \phi)$ on $X$ and $Y$ respectively, we have
the Higgs bundle $(E\boxtimes F,\, \theta\otimes{\rm Id}+ \text{\rm Id}\otimes\phi)$ on $X\times Y$.
This construction produces a homomorphism
\begin{equation}\label{product} \pi_1^H(X\times_k Y,\, (x,y)) \,\longrightarrow\, \pi_1^H(X,x)\times\pi_1^H(Y,y)\, . 
\end{equation}
At the moment we do not know whether
the above homomorphism is an isomorphism. This fact, via   Corollary \ref{equiv}, is related
to the conjecture that Theorem \ref{thmvb} also holds for Higgs bundles. 
 If   indeed  the morphism \eqref{product} is an isomorphism, then
any numerically flat Higgs bundle on $C_1\times\ldots \times C_d$, where $C_i$ are smooth
projective curves, would arise from numerically flat Higgs bundles on the curves $C_i$. A
numerically flat Higgs bundles on a curve is of degree zero. Therefore, all higher Chern classes
of a numerically flat Higgs bundle on $C_1\times\ldots \times C_d$ would be numerically equivalent
to zero. On the other hand, the numerical vanishing of higher Chern classes of
a numerically flat Higgs bundle is the key obstruction if one tries the generalize
of proof of the product formula for the usual numerically flat case (no Higgs field), as given in
\cite{La2}, to   Higgs bundles.

\bigskip\frenchspacing

\end{document}